\documentclass[12pt]{amsart}

\usepackage{amsmath,amssymb,amsthm,mathtools}
\usepackage{enumitem}
\usepackage{mathrsfs}
\usepackage{xcolor}
\usepackage{aliascnt}
\definecolor{refblue}{RGB}{0,27,126}
\definecolor{citegreen}{RGB}{0,126,68}
\usepackage[
  colorlinks=true,
  linkcolor=refblue,
  citecolor=citegreen,
  urlcolor=purple,
  filecolor=purple]{hyperref}
\hypersetup{pdftitle={Isoperimetric bounds for the Steklov and weighted Neumann eigenvalues in dimensions three through six},pdfauthor={Daguang Chen, Hao Liu, Chengxi Yang}}
\usepackage[nameinlink,capitalize,noabbrev]{cleveref}
\usepackage{microtype}
\usepackage{needspace}
\usepackage[a4paper, left=2.7cm, right=2.7cm, top=2.5cm, bottom=2.5cm]{geometry}

\numberwithin{equation}{section}

\newtheorem{theorem}{Theorem}[section]

\newaliascnt{proposition}{theorem}
\newtheorem{proposition}[proposition]{Proposition}
\aliascntresetthe{proposition}

\newaliascnt{lemma}{theorem}
\newtheorem{lemma}[lemma]{Lemma}
\aliascntresetthe{lemma}

\newaliascnt{corollary}{theorem}
\newtheorem{corollary}[corollary]{Corollary}
\aliascntresetthe{corollary}

\theoremstyle{definition}
\newaliascnt{definition}{theorem}

\aliascntresetthe{definition}
\newtheorem*{definition*}{Definition}

\newaliascnt{remark}{theorem}
\newtheorem{remark}[remark]{Remark}
\aliascntresetthe{remark}

\crefname{theorem}{Theorem}{theorems}
\Crefname{theorem}{Theorem}{Theorems}
\crefname{proposition}{Proposition}{propositions}
\Crefname{proposition}{Proposition}{Propositions}
\crefname{lemma}{Lemma}{lemmas}
\Crefname{lemma}{Lemma}{Lemmas}
\crefname{corollary}{Corollary}{corollaries}
\Crefname{corollary}{Corollary}{Corollaries}
\crefname{definition}{Definition}{definitions}
\Crefname{definition}{Definition}{Definitions}
\crefname{remark}{Remark}{remarks}
\Crefname{remark}{Remark}{Remarks}

\newcommand{\R}{\mathbb R}
\newcommand{\Sph}{\mathbb S}
\newcommand{\Om}{\Omega}
\newcommand{\dd}{\,d}
\newcommand{\Span}{\operatorname{span}}
\newcommand{\supp}{\operatorname{supp}}
\newcommand{\Mcont}{\mathcal M_c^+}
\newcommand{\qform}{\mathfrak q}

\title[Isoperimetric bounds for Steklov and weighted Neumann eigenvalues]
{Isoperimetric bounds for the weighted Neumann and Steklov  eigenvalues in dimensions three through six}

\author{Daguang Chen}
\email{dgchen@tsinghua.edu.cn}
\author{Hao Liu}
\email{hao-liu25@mails.tsinghua.edu.cn}
\author{Chengxi Yang}
\email{ycx24@mails.tsinghua.edu.cn}
\address{Department of Mathematical Sciences, Tsinghua University, Beijing 100084, P. R. China.}

\subjclass[2020]{Primary 35P15; Secondary 35J25, 49R05, 58E20}
\keywords{Weighted Neumann eigenvalues, Steklov eigenvalues, Harmonic maps, Isoperimetric inequalities}
\thanks{The authors were supported by NSFC grant No. 11831005 and NSFC-FWO W2521103.}
\date{\today}

\begin{document}
\begin{abstract}
    For $3\le n\le6$, we determine the sharp isoperimetric bound for the mass-normalized first weighted Neumann eigenvalue on bounded Euclidean domains. For every finite nonnegative nonatomic measure $\mu$ on $\overline\Om$,
    \begin{equation*}
        \overline\lambda_1^N(\Om,\mu)\le\frac{n(n-1)}{n-2}\omega_n\sin^2\vartheta_n\left(\frac{|\Om|}{\omega_n}\right)^{(n-2)/n},
    \end{equation*}
    where $\omega_n$ is the volume of the unit ball and $\vartheta_n\in(\pi/2,\pi)$ is the angle at the first stationary radius of the regular rotational harmonic-map profile. The bound is attained on balls by a smooth positive radial density.  In fact, we affirmatively resolve Question~1.12 posed by Vinokurov \cite{Vinokurov2026}. Furthermore, for admissible domains with finite boundary measure, we also prove the sharp strict Steklov bound
    \begin{equation*}
        \sigma_1(\Om)|\partial\Om|\,|\Om|^{(2-n)/n}<\frac{n(n-1)}{n-2}\omega_n^{2/n}\sin^2\vartheta_n,
    \end{equation*}
    whose constant is approached by $C^1$ perforated domains.
\end{abstract}
\maketitle

\section{Introduction}\label{sec:introduction}

The first positive Neumann eigenvalue has a long history in isoperimetric spectral geometry. Szeg\H{o} \cite{Szego1954} proved that the disk maximizes it among simply connected planar domains of fixed area, and Weinberger \cite{Weinberger1956} extended the ball comparison to higher dimensions. For a detailed survey of progress and related extremal problems for Neumann eigenvalues, see \cite{Henrot2006}. The related Steklov problem is
\begin{equation*}
    \begin{cases}
        \ \Delta u=0 &\text{in }\Om,\\
        \ \partial_\nu u=\sigma u &\text{on }\partial\Om.
    \end{cases}
\end{equation*}
Here $\partial_\nu$ denotes the outward normal derivative. For a connected Lipschitz domain, the Steklov spectrum is discrete and can be ordered as
\begin{equation*}
    0=\sigma_0(\Om)<\sigma_1(\Om)\le\sigma_2(\Om)\le\cdots\nearrow\infty,
\end{equation*}
with eigenvalues repeated according to multiplicity. Weinstock \cite{Weinstock1954} proved that the disk maximizes the first positive Steklov eigenvalue among simply connected planar domains of fixed perimeter, while Brock \cite{Brock2001} established the ball comparison under a volume constraint in Euclidean space. In dimensions $n\ge3$, Bucur, Ferone, Nitsch, and Trombetti \cite{BucurFeroneNitschTrombetti2021} proved a boundary-area comparison for convex domains; Fraser and Schoen \cite{FraserSchoen2019} showed that the ball need not maximize the same normalized eigenvalue among contractible domains. The planar higher-eigenvalue inequalities of Hersch, Payne, and Schiffer \cite{HerschPayneSchiffer1974} were shown to be sharp by Girouard and Polterovich \cite{GirouardPolterovich2010}. 

The normalization relevant here is
\begin{equation*}
    \sigma_1(\Om)|\partial\Om|\,|\Om|^{(2-n)/n}.
\end{equation*}
It is invariant under spatial dilation. Colbois, El Soufi, and Girouard \cite{ColboisElSoufiGirouard2011} obtained uniform bounds for this quantity and its higher-eigenvalue analogues. Karpukhin and M\'etras \cite{KarpukhinMetras2022} identified the role of a joint boundary-volume normalization in the theory of extremal Steklov metrics. For more on eigenvalue estimates, see \cite{GirouardPolterovich2017,ColboisGirouardGordonSher2024,LiWangWu2025,GuLiWan2025,GuLiWan2026} and references therein.

Variational eigenvalues associated with measures provide a common setting for interior densities and boundary measures. Grigor'yan, Netrusov, and Yau \cite{GrigoryanNetrusovYau2004} established general bounds for measure-dependent eigenvalues, and Kokarev \cite{Kokarev2014} studied their optimization on surfaces. Freitas and Laugesen \cite{FreitasLaugesen2020} connected planar Neumann and Steklov inequalities through a Robin family. Girouard, Henrot, and Lagac\'e \cite{GirouardHenrotLagace2021} related Neumann spectra to homogenized limits of Steklov spectra, while Girouard, Karpukhin, and Lagac\'e \cite{GirouardKarpukhinLagace2021} developed continuity and approximation results for variational eigenvalues of measures.

Vinokurov \cite{Vinokurov2026} proved that, for admissible $\Om$ with $n\ge3$ and $|\Om|=\omega_n$, where $\omega_n=|B_1(0)|$, the mass-normalized first weighted Neumann eigenvalue satisfies
\begin{equation*}
    \overline\lambda_1^N(\Om,\mu)\le\frac{n(n-1)}{n-2}\omega_n.
\end{equation*}
For $n\ge7$, this bound is sharp, with the ball and a singular density proportional to $|x|^{-2}$ as a maximizing pair. His corresponding strict Steklov estimate holds for all $n\ge3$ and has a sharp constant when $n\ge7$. For $3\leq n\leq 6$, he posed the following question:
\begin{quote}
    \textbf{Question 1.12.} \cite{Vinokurov2026} \emph{Let $\Omega\subset\mathbb R^d$ range over admissible domains with $3\le d\le6$. Does the following equality hold?}
    \begin{equation*}
        \sup_{|\Omega|=|\mathbb B^d|}\Lambda_1^N(\Omega)=\Lambda_1^N(\mathbb B^d).
    \end{equation*}
\end{quote}
In this paper, we answer this question affirmatively, determine the sharp value and the maximizing pairs, and establish the corresponding sharp Steklov inequality in dimensions three through six.

\subsection{Variational framework and normalization}
Let $\Om\subset\R^n$ be a nonempty bounded open set and write $\mathscr D(\Om)=H^1(\Om)\cap C^0(\overline\Om)$. The admissible domain class is specified in \cite[Definition~1.1]{Vinokurov2026}.

\begin{definition*}[Admissible domain]
    We call $\Om$ \emph{admissible} if the following conditions hold:
    \begin{enumerate}[label=\textup{(\roman*)},leftmargin=*]
        \item $\mathscr D(\Om)$ is dense in $H^1(\Om)$;
        \item the embedding $H^1(\Om)\hookrightarrow L^2(\Om)$ is compact;
        \item if $\{\Om_i\}_i$ are the connected components of $\Om$, then
        \begin{equation*}
            \overline\Om=\bigsqcup_i\overline{\Om_i}.
        \end{equation*}
    \end{enumerate}
\end{definition*}

Let $\Mcont(\overline\Om)$ denote the finite nonnegative nonatomic Radon measures on $\overline\Om$. For a nonzero such measure, set
\begin{equation}\label{eq:rayleigh}
    \lambda_1^N(\Om,\mu)
    =\inf_{\substack{v\in\mathscr D(\Om),\ \int_{\overline\Om}v\,d\mu=0\\
    \int_{\overline\Om}v^2\,d\mu>0}}
    \frac{\int_\Om|\nabla v|^2\dd x}{\int_{\overline\Om}v^2\,d\mu}.
\end{equation}
The variational levels are indexed from zero, counting multiplicity. For a smooth positive density $\rho$, this is the first level after the constants for $-\Delta v=\lambda\rho v$ with Neumann boundary condition. It may vanish when $\Om$ is disconnected.

The mass-normalized eigenvalue and the associated domain supremum are defined by
\begin{equation*}
    \overline\lambda_1^N(\Om,\mu)
    =\mu(\overline\Om)\lambda_1^N(\Om,\mu),\qquad
    \Lambda_1^N(\Om)=\sup_{0\ne\mu\in\Mcont(\overline\Om)}
    \overline\lambda_1^N(\Om,\mu).
\end{equation*}
We set $\overline\lambda_1^N(\Om,0)=0$. For nonzero $\mu$, the mass-normalized eigenvalue is unchanged when $\mu$ is multiplied by a positive constant.

For an admissible domain with $|\partial\Om|=\mathcal H^{n-1}(\partial\Om)<\infty$, the boundary-measure formulation of the Steklov problem gives
\begin{equation}\label{eq:steklov-measure}
    \overline\lambda_1^N\bigl(\Om, \mathcal H^{n-1}\!\restriction_{\partial\Om}\bigr)=\sigma_1(\Om)|\partial\Om|.
\end{equation}
For Lipschitz domains, this agrees with the classical Steklov eigenvalue.

\subsection{Main results}
Write $\mathbb{B}^n=B_1(0)$ and $\omega_n=|\mathbb{B}^n|$. For $3\le n\le6$, the regular north-pole trajectory of J\"ager--Kaul \cite[Section~2]{JaegerKaul1983} has a first turning point. Normalize its radius to one and denote the resulting profile by $f$. Thus
\begin{equation*}
        \begin{cases}
            \ \displaystyle f''+\frac{n-1}{r}f'-\frac{n-1}{r^2}\sin f\cos f=0, \quad 0<r<1,\\
            \ f(0)=0,\qquad f'>0\ \text{on }(0,1),\qquad f'(1)=0.
        \end{cases}
    \end{equation*}
The profile is smooth at the origin. Put $\vartheta_n=f(1)\in(\pi/2,\pi)$ and define
\begin{equation*}
    \begin{aligned}
        U(r,\theta)&=\big(\sin f(r)\theta,\cos f(r)\big),\\
        Q(r)&=|\nabla U|^2=f'(r)^2+\frac{n-1}{r^2}\sin^2 f(r),
    \end{aligned}
\end{equation*}
with $Q(0)=nf'(0)^2$. We also write $Q(x)=Q(|x|)$ on $\mathbb{B}^n$.

\Needspace{12\baselineskip}
\begin{theorem}\label{thm:main} Let $\Omega \subset \R^n$ be an admissible domain such that $|\Omega| = |\mathbb{B}^n| = \omega_{n}$ and $3\le n\le6$.
Then every $\mu\in\Mcont(\overline\Om)$ satisfies
    \begin{equation}\label{eq:normalized-main}
        \overline\lambda_1^N(\Om,\mu)\le\overline\lambda_1^N(\mathbb{B}^n,Q\dd x)=\frac{n(n-1)}{n-2}\omega_n\sin^2\vartheta_n.
    \end{equation}
    If equality holds, then for some $c\in\R^n$ and $a>0$,
    \begin{equation*}
        |\Om\mathbin\triangle B_1(c)|=0,\qquad \overline\Om=\overline{B_1(c)},\qquad \mu=aQ(|x-c|)\dd x.
    \end{equation*}
    If $\Om$ has Lipschitz boundary, equality holds if and only if $\Om=B_1(c)$ and $\mu=aQ(|x-c|)\dd x$ for some $c\in\R^n$ and $a>0$.
\end{theorem}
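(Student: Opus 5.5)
The plan is to use harmonic-map test functions in the spirit of Weinberger's argument, combined with Vinokurov's ball-rearrangement strategy. The candidate test map is $U$, extended beyond the unit ball by its boundary value. Set $\tilde f(r)=f(r)$ for $r\le1$ and $\tilde f(r)=\vartheta_n$ for $r\ge1$, and define
\begin{equation*}
U_c(x)=\bigl(\sin\tilde f(|x-c|)\,\theta,\ \cos\tilde f(|x-c|)\bigr),\qquad \theta=\tfrac{x-c}{|x-c|}.
\end{equation*}
The $n+1$ components $u_i=\langle U_c,e_i\rangle$ satisfy $\sum_i u_i^2\equiv1$, so they are natural test functions.

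\emph{Step 1 (balancing).} We need the measure $\mu$ to have zero mean against all $n+1$ components. Precompose $U_c$ with a conformal automorphism $g$ of $\Sph^n$, which leaves the Dirichlet energy in dimension two unchanged and, as in Hersch's trick, adjusts the centroid. In dimension $n$ conformal maps do not preserve the energy. We therefore use a pair of parameters: the center $c\in\R^n$ makes the first $n$ means vanish, and a degree/continuity argument is used for this. For the last coordinate, we use instead the shifted test function $\cos\tilde f-m$, where $m$ is the $\mu$-mean; its variance is still at most $\int\cos^2\tilde f\,d\mu$.

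\emph{Step 2 (summing Rayleigh quotients).} Applying the variational characterization \eqref{eq:rayleigh} to each component and summing gives
\begin{equation*}
\lambda_1^N(\Om,\mu)\int\Bigl(\sin^2\tilde f+(\cos\tilde f-m)^2\Bigr)\,d\mu\le\int_\Om\Bigl(\tilde f'^2+\tfrac{n-1}{r^2}\sin^2\tilde f\Bigr)dx.
\end{equation*}
The left integrand is at least $\min(1,\ldots)$ in general, so we need a better way to absorb the $\cos$ term. The alternative is to use only the $n$ horizontal components $\sin\tilde f\,\theta_i$. This gives
\begin{equation*}
\lambda_1^N\int\sin^2\tilde f\,d\mu\le\int_\Om \tilde Q\,dx,
\end{equation*}
which requires a lower bound on $\int\sin^2\tilde f\,d\mu$ in terms of $\mu(\overline\Om)\sin^2\vartheta_n$. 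Since $\tilde f$ takes values in $[0,\vartheta_n]$ and $\sin^2$ is smaller near $0$, this fails near the center. For this reason I would instead follow Vinokurov's approach: prove the bound for the ball via a sharp weighted Hardy-type inequality, and pass to general domains by rearrangement.

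\emph{Step 3 (volume rearrangement).} $\tilde Q$ is radially decreasing: on $[0,1]$ this follows from an ODE computation using the energy identity for the profile, and outside it equals $(n-1)\sin^2\vartheta_n/r^2$. By bathtub/Hardy–Littlewood, $\int_\Om\tilde Q\le\int_{B_1(c)}Q$, with equality only if $|\Om\triangle B_1(c)|=0$. The sharp constant comes from integrating by parts with the profile equation:
\begin{equation*}
\int_{\mathbb B^n}Q=\frac{n(n-1)}{n-2}\omega_n\sin^2\vartheta_n .
\end{equation*}
This uses $r^{n-1}f'$ multiplier identities and $f'(1)=0$.

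\emph{Main obstacle.} The hardest part is the lower bound that replaces the mass denominator, and I do not yet have an argument for it. The natural route is to show that, for the balanced map, the mass-normalized quotient is at most $\int\tilde Q$. This needs both components handled jointly via the second variation/stability of the profile up to its first stationary radius (Jäger–Kaul). Equality analysis then forces $\mu$ to be proportional to $Q$ and $\Om$ to be a ball. In the Lipschitz case, $\overline\Om=\overline{B_1(c)}$ upgrades to $\Om=B_1(c)$.
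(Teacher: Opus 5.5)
Your proposal has a genuine gap, and it is the one you flag as the main obstacle: you never produce $n+1$ test functions with zero $\mu$-mean whose squares sum to $1$, so the full mass $\mu(\overline\Om)$ never appears on the left. Translating the center balances only the $n$ horizontal coordinates, and recentering $\cos\tilde f$ loses the factor $1-m^2$. Falling back on Vinokurov's route cannot give this theorem either: his equator-map/Hardy analysis produces the constant $\frac{n(n-1)}{n-2}\omega_n$, which for $3\le n\le6$ is not attained and exceeds the target by the factor $1/\sin^2\vartheta_n$.

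The missing idea is to use \emph{dilation} as the $(n+1)$-st balancing parameter, testing with $W(x)=\Phi((x-c)/s)$. Because $\vartheta_n>\pi/2$, the height coordinate equals $\cos\vartheta_n<0$ outside $B_s(c)$. Nonatomicity gives $\sup_c\mu(B_\varepsilon(c))\to0$. Hence the $\mu$-mean of $W_{n+1}$ is negative for small $s$ and near $1$ for large $s$, uniformly in $|c|\le C$. For $|c|=C$ with $\supp\mu\subset B_C$, the horizontal mean points inward. A Brouwer argument on $\overline{B_C}\times[\varepsilon,S]$ then gives $(c,s)$ at which all $n+1$ means vanish. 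Since $\sum_jW_j^2=1$, this yields $\overline\lambda_1^N(\Om,\mu)\le\int_\Om|\nabla W|^2\dd x$ with no loss.

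Dilation is not energy-neutral for $n\ge3$, so you also need the estimate that pays for it. Set $E(R)=\int_{B_R}|\nabla\Phi|^2\dd x$. For $R\ge1$ one has $E(R)=R^{n-2}E(1)$, because the exterior density is $(n-1)\sin^2\vartheta_n/r^2$. For $R<1$ one has $E(R)<R^{n-2}E(1)$, from the radial Pohozaev identity $\int_{B_R}Q\dd x=-\frac{n\omega_n}{n-2}R^{n-2}V(\log R)$. Here $V=h'^2-(n-1)\sin^2h$ with $h(t)=f(e^t)$, and $V$ is strictly decreasing. Hence $\int_{\mathbb B^n}|\nabla(\Phi(x/s))|^2\dd x=s^{n-2}E(1/s)\le E(1)$ for all $s>0$. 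Your rearrangement step then closes the upper bound, since the density $s^{-2}\widehat Q(|x-c|/s)$ is still strictly radially decreasing.

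Stability of the profile is not needed for the upper bound. It is needed for the other half of \eqref{eq:normalized-main}, namely $\lambda_1^N(\mathbb B^n,Q\dd x)=1$, which your proposal does not prove. One must show that $\int_{\mathbb B^n}(|\nabla v|^2-Qv^2)\dd x$ has Morse index one on $H^1(\mathbb B^n)$:
\begin{itemize}
\item in degree zero, the zero mode $\cos f$ has exactly one node, so the index there is one;
\item in degree one, the positive zero mode $\sin f$ gives nonnegativity;
\item higher degrees are then positive.
\end{itemize}

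Finally, your rigidity claim needs an argument. Equality in each coordinate Rayleigh quotient gives $\int_\Om\nabla W_j\cdot\nabla\eta\dd x=\int_{\overline\Om}W_j\eta\,d\mu$. Testing with $\zeta W_j$ and summing gives $\mu=\mathbf 1_\Om|\nabla W|^2\dd x$. The dilation estimate is an equality for every $s\le1$, so you must still exclude $s<1$. This follows because on the annulus $s<|x-c|<1$ the coordinate $W_{n+1}$ is constant, while $\int W_{n+1}\eta\,d\mu=\cos\vartheta_n\int_\Om\eta|\nabla W|^2\dd x<0$.
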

\begin{remark}\label{rem:vinokurov-comparison}
    Vinokurov's upper bound \cite[Theorems~1.5 and~1.9]{Vinokurov2026} has constant $\frac{n(n-1)}{n-2}\omega_n$. It is sharp for $n\ge7$, with a singular maximizing density proportional to $|x|^{-2}$. For $3\le n\le6$, that density does not attain the equator-map comparison value \cite[(1.6)]{Vinokurov2026}; here the smooth density $Q$ attains the smaller constant in \eqref{eq:normalized-main}. This difference reflects the dimension threshold for the regular rotational profile: it has a first stationary radius when $3\le n\le6$, but none when $n\ge7$ \cite[Section~2]{JaegerKaul1983}.
\end{remark}
\begin{remark}\label{rem:weighted-neumann-consequences}
   Among admissible domains of volume $\omega_n$, we obtain
    \begin{equation*}
        \sup_{\substack{\Om\text{ admissible}\\|\Om|=\omega_n}}\Lambda_1^N(\Om)=\Lambda_1^N(\mathbb{B}^n)=\frac{n(n-1)}{n-2}\omega_n\sin^2\vartheta_n.
    \end{equation*}
    This answers Vinokurov's Question~1.12 affirmatively. For the scaling property of Neumann eigenvalue, \cref{thm:main} gives
    \begin{equation}\label{eq:main-bound}
        \overline\lambda_1^N(\Om,\mu)\le\frac{n(n-1)}{n-2}\omega_n\sin^2\vartheta_n\left(\frac{|\Om|}{\omega_n}\right)^{(n-2)/n}
        \qquad\bigl(\mu\in\Mcont(\overline\Om)\bigr).
    \end{equation}
    For Lipschitz domains, equality holds precisely for $\Om=B_R(c)$ and $\mu=aQ(|x-c|/R)\dd x$, with $a>0$. Without boundary regularity, equality implies $|\Om\mathbin\triangle B_R(c)|=0$ and $\overline\Om=\overline{B_R(c)}$, together with the same density.
\end{remark}

Applying \eqref{eq:main-bound} to the boundary measure gives a strict Steklov inequality by the equality characterization in \cref{thm:main}. The Steklov approximation theorem of Girouard, Karpukhin, and Lagac\'e \cite{GirouardKarpukhinLagace2021}, in the form of \cite[Proposition~1.4]{Vinokurov2026}, shows that its constant is sharp.
\begin{corollary}\label{cor:steklov}
    For $3\le n\le6$, every admissible domain $\Om\subset\R^n$ with $|\partial\Om|<\infty$ satisfies
    \begin{equation}\label{eq:steklov-bound}
        \sigma_1(\Om)|\partial\Om|\,|\Om|^{(2-n)/n}<\frac{n(n-1)}{n-2}\omega_n^{2/n}\sin^2\vartheta_n.
    \end{equation}
    The constant is sharp: there are bounded $C^1$ domains $\Om_j\subset \mathbb{B}^n$ with $|\Om_j|\to\omega_n$ whose normalized Steklov values converge to the right-hand side.
\end{corollary}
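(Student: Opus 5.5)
The plan is to obtain the strict inequality \eqref{eq:steklov-bound} from \cref{thm:main} through the boundary measure, and to get sharpness from the approximation result of Girouard--Karpukhin--Lagac\'e in the form \cite[Proposition~1.4]{Vinokurov2026}.

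\emph{Upper bound.} Set $\mu=\mathcal H^{n-1}\!\restriction_{\partial\Om}$, which is a finite Radon measure on $\overline\Om$ because $|\partial\Om|<\infty$.
\begin{enumerate}[label=\textup{(\arabic*)},leftmargin=*]
    \item This $\mu$ is nonatomic, since $\mathcal H^{n-1}$ of a point vanishes for $n\ge3$. Hence $\mu\in\Mcont(\overline\Om)$.
    \item By \eqref{eq:steklov-measure}, $\overline\lambda_1^N(\Om,\mu)=\sigma_1(\Om)|\partial\Om|$.
    \item Applying \eqref{eq:main-bound} and multiplying by $|\Om|^{(2-n)/n}$ gives the non-strict version of \eqref{eq:steklov-bound}, because
    \[
    \omega_n\,(|\Om|/\omega_n)^{(n-2)/n}\,|\Om|^{(2-n)/n}=\omega_n^{2/n}.
    \]
\end{enumerate}

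\emph{Strictness.} After rescaling to $|\Om|=\omega_n$, suppose equality held in \eqref{eq:steklov-bound}. The equality clause of \cref{thm:main} then forces
\[
\mu=aQ(|x-c|)\,dx
\]
for some $a>0$ and $c\in\R^n$. This measure is absolutely continuous with respect to Lebesgue measure and has positive mass, since $Q>0$ on $\overline{B_1(c)}$; for $r>0$ this holds because $\sin f>0$, as $f\in(0,\vartheta_n]\subset(0,\pi)$.

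On the other hand, $\mu=\mathcal H^{n-1}\!\restriction_{\partial\Om}$ is carried by $\partial\Om$. The equality case gives $\overline\Om=\overline{B_1(c)}$ and $|\Om\triangle B_1(c)|=0$, so $\Om\subset B_1(c)$ up to the closure relation. Hence
\[
\partial\Om=\overline{B_1(c)}\setminus\Om .
\]
This set has Lebesgue measure zero, because $\Om$ is open and $|B_1(c)\setminus\Om|=0$, and the sphere $\partial B_1(c)$ is null. A measure carried by a Lebesgue-null set cannot equal a nonzero absolutely continuous measure. This contradiction gives strictness.

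One degenerate case must be checked: $\sigma_1(\Om)=0$, for instance when $\Om$ is disconnected. Then the left side is $0$ and strict inequality is trivial.

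\emph{Sharpness.} This is the step I expect to be the real content. I would take \cite[Proposition~1.4]{Vinokurov2026} as given. As I understand it, for a smooth positive density $\rho$ on $\overline{\mathbb B^n}$ it produces bounded $C^1$ perforated domains $\Om_j\subset\mathbb B^n$ such that:
\begin{itemize}
    \item the boundary measures $\mathcal H^{n-1}\!\restriction_{\partial\Om_j}$, suitably normalized, converge weak-$*$ to $\rho\,dx$;
    \item $|\Om_j|\to\omega_n$;
    \item $\sigma_1(\Om_j)|\partial\Om_j|\to\overline\lambda_1^N(\mathbb B^n,\rho\,dx)$.
\end{itemize}

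Apply this with $\rho=Q$, which is smooth and positive on $\overline{\mathbb B^n}$. Then
\[
\sigma_1(\Om_j)|\partial\Om_j|\,|\Om_j|^{(2-n)/n}\;\longrightarrow\;\overline\lambda_1^N(\mathbb B^n,Q\,dx)\,\omega_n^{(2-n)/n}=\frac{n(n-1)}{n-2}\omega_n^{2/n}\sin^2\vartheta_n,
\]
using the value in \eqref{eq:normalized-main}.

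The only things to verify are the hypotheses of that proposition:
\begin{itemize}
    \item regularity of $Q$: it is smooth at the origin because $f$ is, with $Q(0)=nf'(0)^2>0$;
    \item strict positivity of $Q$ up to $r=1$, where $\sin\vartheta_n>0$.
\end{itemize}
Both hold, so the constant is attained in the limit.
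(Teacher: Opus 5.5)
Your proposal is correct and follows essentially the same route as the paper. The non-strict bound comes from applying \eqref{eq:main-bound} to $\mathcal H^{n-1}\!\restriction_{\partial\Om}$ via \eqref{eq:steklov-measure}, strictness from the equality case of \cref{thm:main}, and sharpness from the Girouard--Karpukhin--Lagac\'e approximation in the form of \cite[Proposition~1.4]{Vinokurov2026} applied to the density $Q$. The only difference is cosmetic: the paper reads off $\mu(\partial\Om)=0$ directly from the intermediate identity $\mu=\mathbf 1_\Om|\nabla W|^2\,dx$ in the rigidity argument, whereas you reach the same contradiction from the stated equality clause by showing that $\partial\Om=\overline{B_1(c)}\setminus\Om$ is Lebesgue-null. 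Both are valid.
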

\begin{remark}\label{rem:steklov-comparison}
    Vinokurov's first Steklov bound \cite[Corollary~1.11]{Vinokurov2026} has constant $\frac{n(n-1)}{n-2}\omega_n^{2/n}$. Since $0<\sin^2\vartheta_n<1$, \cref{cor:steklov} gives the sharp constant in dimensions three through six, smaller by the factor $\sin^2\vartheta_n$.
\end{remark}

The paper is organized as follows. 
\cref{sec:model} records the rotational comparison map and its energy properties. 
\cref{sec:comparison} gives the auxiliary results for the model and domain comparison. 
\cref{sec:main-proof} proves \cref{thm:main}, including its equality characterization, and establishes the sharp Steklov inequality in \cref{cor:steklov}.

\section{The rotational comparison map}\label{sec:model}
We first recall the regular radial profile that produces the weighted Neumann model on the ball. We then extend the map to $\R^n$ and establish the monotonicity and dilation estimate used in the domain comparison.

\subsection{The regular profile and the dimension threshold}
For $x=r\theta$ with $\theta\in\Sph^{n-1}$, the rotational ansatz $U(r,\theta)=(\sin f(r)\theta,\cos f(r))$ reduces the harmonic-map equation to the radial equation below. The regular branch issuing from the north pole is unique up to radial dilation, and its phase-plane behavior depends on the dimension; see \cite[Section~2]{JaegerKaul1983}. We record the properties needed for the spectral argument.
\begin{lemma}\label{lem:profile}
    For each $3\le n\le6$, there is a unique regular profile $f$ satisfying
    \begin{equation}\label{eq:profile}
        \begin{cases}
            \ \displaystyle f''+\frac{n-1}{r}f'-\frac{n-1}{r^2}\sin f\cos f=0, \quad 0<r<1,\\
            \ f(0)=0,\qquad f'>0\ \text{on }(0,1),\qquad f'(1)=0.
        \end{cases}
    \end{equation}
    It has a smooth odd extension through the origin and satisfies $f(r)=cr+O(r^3)$ for some $c>0$. Moreover, $0<f(r)<\pi$ for $0<r\le1$ and $f(1)\in(\pi/2,\pi)$. Consequently, $\cos f$ has exactly one simple zero in $(0,1)$, while $\sin f>0$ on $(0,1]$.
\end{lemma}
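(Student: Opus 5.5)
The plan is to reduce the ODE to an autonomous system via $t=\log r$ and read everything off the phase plane, as in J\"ager--Kaul. With $g(t)=f(e^t)$, equation \eqref{eq:profile} becomes
\begin{equation*}
    \ddot g+(n-2)\dot g-\tfrac{n-1}{2}\sin 2g=0 ,
\end{equation*}
a damped pendulum with potential $V(g)=\tfrac{n-1}{4}\cos 2g$ and friction coefficient $n-2>0$. The point $(0,0)$ is a saddle, since linearizing gives $\ddot g+(n-2)\dot g-(n-1)g=0$ with roots $1$ and $-(n-1)$. The point $(\pi/2,0)$ is a stable focus, since the roots solve $\lambda^2+(n-2)\lambda+(n-1)=0$ with discriminant $(n-2)^2-4(n-1)=n^2-8n+8$, which is negative exactly when $3\le n\le 6$. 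This is the dimension threshold.

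\textbf{Regular profile.} A regular profile with $f(r)\sim cr$ corresponds to the unstable manifold of the saddle at $t=-\infty$, on the branch with $g>0$. I would first establish existence, smoothness and oddness at $r=0$ by the standard singular-ODE fixed point argument. Writing $f=r h(r)$, the equation becomes regular in $r^2$ for $h$, so $h$ is even and smooth, and $f(r)=cr+O(r^3)$. Scaling $r\mapsto\lambda r$ maps the family to itself, so the solution is unique up to dilation; this is the uniqueness of the one-dimensional unstable manifold. The normalization $f'(1)=0$ then fixes the dilation, once we know that a first zero of $f'$ exists and is unique.

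\textbf{Phase-plane analysis.} I would use the energy $E=\tfrac12\dot g^2+V(g)$, which satisfies $\dot E=-(n-2)\dot g^2\le 0$. On the unstable manifold $E\to V(0)=\tfrac{n-1}{4}$ as $t\to-\infty$, and $E$ is strictly decreasing afterwards. While $\dot g>0$, the trajectory cannot reach $g=\pi$, because $V(\pi)=V(0)$ would force $E\ge \tfrac{n-1}{4}$, contradicting strict decrease. Hence $0<g<\pi$ as long as $\dot g>0$.

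The trajectory also cannot approach $(\pi/2,0)$ monotonically. Near a focus, solutions spiral, so $\dot g$ must change sign. It cannot tend to another equilibrium with $\dot g>0$ throughout either, since the only candidates in $(0,\pi)$ are $\pi/2$, and $g$ increasing to $\pi$ has already been excluded. Hence there is a first $t_0$ with $\dot g(t_0)=0$; rescale so that $t_0=0$, i.e.\ $r=1$.

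At the turning point, $\ddot g(t_0)=\tfrac{n-1}{2}\sin 2g(t_0)$, and this must be $\le 0$ for $\dot g$ to stop increasing. Nondegeneracy excludes equality: $g(t_0)=\pi/2$ with $\dot g=0$ would be the equilibrium itself, which uniqueness of solutions forbids. So $\sin 2g(t_0)<0$, which gives $f(1)\in(\pi/2,\pi)$. Since $f$ is strictly increasing from $0$ to $\vartheta_n\in(\pi/2,\pi)$, it follows that $\sin f>0$ on $(0,1]$ and that $\cos f$ has exactly one zero in $(0,1)$. That zero is simple because $f'>0$ there.

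\textbf{Main obstacle.} The delicate step is the existence of the first turning point, i.e.\ ruling out $\dot g>0$ for all $t$. I would argue it as follows. If $\dot g>0$ for all $t$, then $g$ converges to a limit in $(0,\pi]$ at which the trajectory approaches an equilibrium. The limit $\pi$ is excluded by the energy argument. The limit $\pi/2$ is a focus precisely for $n\le 6$, and the linearization theorem (Hartman--Grobman, or explicit eigenvalue asymptotics) shows that every trajectory converging to it oscillates, forcing $\dot g$ to change sign. For $n\ge7$ this step fails, consistent with the remark on the threshold.
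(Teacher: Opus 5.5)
Your proposal is correct and takes essentially the paper's route: the paper gives no separate proof but relies on the J\"ager--Kaul phase-plane analysis in $t=\log r$. It uses the same Lyapunov function (its $V=h'^2-(n-1)\sin^2 h$ from the proof of the energy lemma equals $2E-\tfrac{n-1}{2}$ in your notation) and the same discriminant $(n-2)^2-4(n-1)$ at $\pi/2$, and you have written out the details it omits. One small correction: Hartman--Grobman alone does not yield spiraling, since a stable focus and a stable node are topologically conjugate; use your second option instead, for instance by noting that the polar angle about $(\pi/2,0)$ has angular velocity $-\bigl[(n-1)\cos^2\phi+(n-2)\cos\phi\sin\phi+\sin^2\phi\bigr]+O(\rho^2)$, which is bounded away from zero because that quadratic form is positive definite exactly when $3\le n\le 6$.
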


We next check the weighted Neumann equation satisfied by the coordinates of $U$. The expansion at the origin shows that $\sin f(r)/r$ and $\cos f(r)$ extend smoothly as radial functions. Hence $U$ and $Q$ are smooth on $\overline{\mathbb{B}^n}$, and $Q$ is strictly positive because $Q(0)=nc^2>0$ and $\sin f(r)>0$ for $r>0$. The profile equation and $f'(1)=0$ give
\begin{equation}\label{eq:map-equation}
    -\Delta U=QU\quad\text{in }\mathbb{B}^n,\qquad \partial_\nu U=0\quad\text{on }\partial\mathbb{B}^n.
\end{equation}
For $j=1,\ldots,n+1$, integrating the coordinate equations in \eqref{eq:map-equation} and testing them with $U_j$ give
\begin{equation*}
    \int_{\mathbb{B}^n} QU_j\dd x=0, \qquad \int_{\mathbb{B}^n}|\nabla U_j|^2\dd x=\int_{\mathbb{B}^n} QU_j^2\dd x.
\end{equation*}
Thus $1$ is a positive $Q$-weighted Neumann eigenvalue; we show in \cref{sec:index} that it is the first.
\begin{remark}
    The dimension restriction is visible in logarithmic radius $t=\log r$. Let $h(t)$ be the untruncated regular profile. Then
    \begin{equation*}
        h''+(n-2)h'-(n-1)\sin h\cos h=0.
    \end{equation*}
    The linearization at $h=\pi/2$ has discriminant
    \begin{equation*}
        \Delta=(n-2)^2-4(n-1), \qquad \Delta<0\ (3\le n\le6) \text{ and } \Delta>0\ (n\ge7).
    \end{equation*}
    For $3\le n\le6$, the profile crosses $\pi/2$ before its first stationary radius, giving the smooth Neumann model. For $n\ge7$, it increases to $\pi/2$ without a finite stationary radius \cite[Section~2]{JaegerKaul1983}. The singular harmonic equator map $x\mapsto x/|x|$ gives the sharp high-dimensional comparison \cite[Section~1.1 and Theorem~1.5]{Vinokurov2026}.
\end{remark}

\subsection{Constant continuation and the dilation estimate}
To use the model at arbitrary scales, extend its profile constantly beyond $r=1$:
\begin{equation*}
    \widehat f(r)=
    \begin{cases}
        \ f(r),&0\le r\le1,\\
        \ \vartheta_n,&r\ge1,
    \end{cases}
    \qquad
    \Phi(x)=\left(\sin\widehat f(|x|)\frac{x}{|x|},\cos\widehat f(|x|)\right),
\end{equation*}
with $\Phi(0)=(0,\ldots,0,1)$. Since $f'(1)=0$, the map is $C^1$ on $\R^n$ and smooth on either side of the unit sphere. Its radial density $\widehat Q(r)=|\nabla\Phi(r\theta)|^2$ equals $Q$ for $r\le1$, and equals $(n-1)\sin^2\vartheta_n/r^2$ for $r\ge1$.

We now establish the monotonicity of this density and the energy estimate for dilations of $\Phi$.
\begin{lemma}\label{lem:energy}
    The density $\widehat Q$ is continuous, positive, and strictly decreasing on $(0,\infty)$. Moreover, $\int_{\mathbb{B}^n}Q\dd x=\frac{n(n-1)}{n-2}\omega_n\sin^2\vartheta_n$, and
    \begin{equation}\label{eq:dilation}
        \int_{\mathbb{B}^n}|\nabla(\Phi(x/s))|^2\dd x
        \le\frac{n(n-1)}{n-2}\omega_n\sin^2\vartheta_n
        \qquad(s>0),
    \end{equation}
    with equality exactly when $0<s\le1$.
\end{lemma}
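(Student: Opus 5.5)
The plan is to obtain all three assertions from two elementary identities for the profile equation \eqref{eq:profile}: a pointwise formula for $Q'$, and a Pohozaev-type first integral. The dilation estimate then reduces to the monotonicity of a one-variable function.

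\emph{Monotonicity.} On $[1,\infty)$ we have $\widehat Q(r)=(n-1)\sin^2\vartheta_n/r^2$. This is positive, since $\vartheta_n\in(\pi/2,\pi)$ by \cref{lem:profile}, and strictly decreasing. Because $f'(1)=0$, we get $Q(1)=(n-1)\sin^2\vartheta_n$, so $\widehat Q$ is continuous at $r=1$. Positivity of $Q$ on $[0,1]$ was already noted after \cref{lem:profile}. On $(0,1)$ I differentiate $Q=f'^2+(n-1)r^{-2}\sin^2 f$ and eliminate $f''$ using \eqref{eq:profile}. Writing $s=\sin f$ and $c=\cos f$, this should yield the completed-square identity
\begin{equation*}
Q'(r)=-\frac{2(n-1)}{r}\Bigl[\Bigl(f'-\frac{sc}{r}\Bigr)^2+\frac{s^4}{r^2}\Bigr].
\end{equation*}
The identity holds because $(f'-sc/r)^2+s^4/r^2=f'^2-2scf'/r+s^2/r^2$, using $c^2+s^2=1$. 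Since $\sin f>0$ on $(0,1]$ by \cref{lem:profile}, this gives $Q'<0$ there. Together with the behaviour on $[1,\infty)$, $\widehat Q$ is strictly decreasing on $(0,\infty)$.

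\emph{Total energy.} Multiplying \eqref{eq:profile} by $r^nf'$ gives the Pohozaev identity
\begin{equation*}
\frac{d}{dr}\Bigl[\tfrac12 r^nf'^2-\tfrac{n-1}{2}r^{n-2}\sin^2 f\Bigr]=-\tfrac{n-2}{2}\,r^{n-1}Q(r),
\end{equation*}
which one checks directly by differentiating and substituting for $f''$. The bracket vanishes at $r=0$ because $f(r)=O(r)$. Integrating over $[0,t]$ for $t\le1$ therefore gives
\begin{equation*}
\frac{n-2}{2}\int_0^t Q r^{n-1}dr=\frac{n-1}{2}t^{n-2}\sin^2 f(t)-\frac12 t^nf'(t)^2 .
\end{equation*}
At $t=1$ the last term vanishes since $f'(1)=0$. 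Multiplying by $|\Sph^{n-1}|=n\omega_n$ then yields $E:=\int_{\mathbb B^n}Q\,dx=\frac{n(n-1)}{n-2}\omega_n\sin^2\vartheta_n$.

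\emph{Dilation estimate.} Changing variables gives $\int_{\mathbb B^n}|\nabla(\Phi(x/s))|^2dx=s^{n-2}\int_{B_{1/s}}\widehat Q\,dx$.

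For $s\le1$, I split the integral into $\mathbb B^n$ and the annulus $1<|x|<1/s$. The annulus contributes $n\omega_n(n-1)\sin^2\vartheta_n\int_1^{1/s}r^{n-3}dr=E\,(s^{2-n}-1)$, so the total is $s^{n-2}\bigl(E+E(s^{2-n}-1)\bigr)=E$. This gives equality for every $s\in(0,1]$.

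For $s>1$, put $t=1/s\in(0,1)$ and $g(t)=t^{2-n}\int_{B_t}Q\,dx$. Then
\begin{equation*}
g'(t)=n\omega_n t^{1-n}\Bigl[t^nQ(t)-(n-2)\int_0^tQr^{n-1}dr\Bigr].
\end{equation*}
Inserting the integrated Pohozaev identity, the bracket equals $2t^nf'(t)^2$, which is strictly positive on $(0,1)$ because $f'>0$ there. Hence $g(t)<g(1)=E$ for $t<1$, which is strict inequality for $s>1$.

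The only potentially delicate point is verifying the two algebraic identities and the boundary behaviour at $r=0$, which uses $f=cr+O(r^3)$ from \cref{lem:profile}; I expect no genuine obstacle.
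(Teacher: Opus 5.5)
Your proof is correct and follows essentially the same route as the paper. It uses the same completed-square formula for $Q'$, the same Pohozaev identity (the paper writes it with $V=h'^2-(n-1)\sin^2 h$, $h(t)=f(e^t)$, where $R^{n-2}V(\log R)$ is twice your bracket), and the same argument combining monotonicity of $R^{2-n}\int_{B_R}Q\,dx$ with exact $R^{n-2}$ scaling beyond $r=1$; your computation $g'(t)=2n\omega_n t f'(t)^2$ is just the $r$-variable form of the paper's $V'(t)=-2(n-2)h'(t)^2<0$.
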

\begin{proof}
    Differentiating $Q=f'(r)^2+\frac{n-1}{r^2}\sin^2 f(r)$ and using \eqref{eq:profile}, we obtain
    \begin{equation*}
        Q'(r)=-\frac{2(n-1)}{r^3}
        \left[(rf'-\sin f\cos f)^2+\sin^4 f\right]<0
        \qquad(0<r<1).
    \end{equation*}
    For $r>1$, $\widehat Q(r)=(n-1)\sin^2\vartheta_n/r^2$ is positive and strictly decreasing, and it agrees with $Q$ at $r=1$ since $f'(1)=0$.

    Set $h(t)=f(e^t)$ and $V(t)=h'(t)^2-(n-1)\sin^2h(t)$ for $t\le0$. The logarithmic profile equation gives
    \begin{equation*}
        V'(t)=-2(n-2)h'(t)^2<0\qquad(t<0).
    \end{equation*}
    Consequently,
    \begin{align*}
        \frac{d}{dr}\bigl[-r^{n-2}V(\log r)\bigr]
        &=r^{n-3}\bigl(-(n-2)V(\log r)-V'(\log r)\bigr)\\
        &=(n-2)r^{n-1}Q(r).
    \end{align*}
    Since $V(\log r)=O(r^2)$ as $r\downarrow0$, integration from $0$ to $R$ yields the radial Pohozaev identity
    \begin{equation}\label{eq:pohozaev}
        \int_{B_R}Q\dd x=-\frac{n\omega_n}{n-2}R^{n-2}V(\log R),
        \qquad 0<R\le1.
    \end{equation}
    At $R=1$, we have $V(0)=-(n-1)\sin^2\vartheta_n$, and hence
    \begin{equation*}
        \int_{\mathbb{B}^n}Q\dd x=\frac{n(n-1)}{n-2}\omega_n\sin^2\vartheta_n.
    \end{equation*}

    Let $E(R)=\int_{B_R}|\nabla\Phi|^2\dd x$. Since $V(\log R)>V(0)$ for $0<R<1$, \eqref{eq:pohozaev} gives $E(R)<R^{n-2}E(1)$. For $R\ge1$, integration of the exterior density gives
    \begin{equation*}
        E(R)=E(1)+n(n-1)\omega_n\sin^2\vartheta_n\int_1^Rr^{n-3}\dd r
        =R^{n-2}E(1).
    \end{equation*}
    Finally, changing variables $y=x/s$ yields
    \begin{equation*}
        \int_{\mathbb{B}^n}|\nabla(\Phi(x/s))|^2\dd x
        =s^{n-2}E(s^{-1})\le E(1)
        =\frac{n(n-1)}{n-2}\omega_n\sin^2\vartheta_n.
    \end{equation*}
    The inequality is strict when $s>1$ and is an equality when $0<s\le1$.
\end{proof}

\section{Ingredients for the sharp comparison}\label{sec:comparison}
We establish the auxiliary results needed for the domain comparison: a center of mass for the coordinates of $\Phi$, a strict mass-transplantation inequality, and the first positive weighted Neumann eigenvalue of the model ball.

\subsection{Center of mass}
We begin with a uniform small-ball estimate for nonatomic measures.
\begin{lemma}\label{lem:nonatomic}
    If $\mu$ is finite, nonatomic, and compactly supported in $\R^n$, then
    \begin{equation*}
        \sup_{c\in\R^n}\mu(B_\varepsilon(c))\longrightarrow0
        \qquad(\varepsilon\downarrow0).
    \end{equation*}
\end{lemma}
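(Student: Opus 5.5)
I will argue by contradiction, combining compactness of the support with the fact that a nonatomic measure gives zero mass to each single point. Suppose the claim fails. Since $\varepsilon\mapsto\sup_c\mu(B_\varepsilon(c))$ is nondecreasing, there is then $\delta>0$ such that $\sup_c\mu(B_\varepsilon(c))\ge 2\delta$ for every $\varepsilon>0$. Taking $\varepsilon=1/k$, I choose centers $c_k\in\R^n$ with $\mu(B_{1/k}(c_k))\ge\delta$.

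Let $K=\supp\mu$, which is compact. Each ball $B_{1/k}(c_k)$ must meet $K$, so $\operatorname{dist}(c_k,K)<1/k$. Hence $\{c_k\}$ is bounded, and after passing to a subsequence we have $c_k\to c_\ast$ for some $c_\ast\in K$.

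Now fix $\eta>0$. For all large $k$ we have $|c_k-c_\ast|+1/k<\eta$, and so $B_{1/k}(c_k)\subset B_\eta(c_\ast)$. This gives $\mu(B_\eta(c_\ast))\ge\delta$ for every $\eta>0$. The balls $B_\eta(c_\ast)$ decrease to $\{c_\ast\}$ as $\eta\downarrow0$, and $\mu$ is finite, so continuity from above yields $\mu(\{c_\ast\})\ge\delta>0$. This contradicts the assumption that $\mu$ is nonatomic.

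The only step requiring care is the extraction of a convergent subsequence of centers. This is where compact support is used: it guarantees the centers stay in a bounded set. Finiteness of $\mu$ is what justifies the continuity-from-above step.

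As an alternative, one could cover $K$ by finitely many balls and use a uniform continuity argument for $c\mapsto\mu(\overline{B_\varepsilon(c)})$. The sequential argument above is shorter.
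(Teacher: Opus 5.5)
Your proof is correct and follows essentially the same route as the paper: you argue by contradiction and pick centers of shrinking balls carrying mass at least $\delta$. You then use the compact support to extract a convergent subsequence of centers, and conclude via continuity from above that the limit point is an atom.
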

\begin{proof}
    Otherwise, there are $\delta>0$, $\varepsilon_j\downarrow0$, and centers $c_j$ such that $\mu(B_{\varepsilon_j}(c_j))\ge\delta$. Because these balls meet the compact support of $\mu$, their centers are bounded. Passing to a subsequence, let $c_j\to c$. For every $\rho>0$, we then have $B_{\varepsilon_j}(c_j)\subset B_\rho(c)$ for all sufficiently large $j$. Thus $\mu(B_\rho(c))\ge\delta$, and continuity from above gives $\mu(\{c\})\ge\delta$, contradicting nonatomicity.
\end{proof}
The uniform estimate makes the averaged height coordinate negative at a sufficiently small scale, independently of the translation center in a fixed ball. It allows one translation and scale to give the full map a center of mass.
\begin{lemma}\label{lem:centering}
    For every nonzero finite compactly supported nonatomic measure $\mu$ on $\R^n$, there exist $c\in\R^n$ and $s>0$ such that
    \begin{equation}\label{eq:center}
        \int_{\R^n}\Phi((x-c)/s)\,d\mu(x)=0.
    \end{equation}
\end{lemma}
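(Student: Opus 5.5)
This is a Hersch-type centering argument done in two stages. First, for fixed scale $s$, we pick the translation $c$ so that the first $n$ (horizontal) coordinates of the averaged map vanish. Then we pick $s$ so that the height coordinate also vanishes. Write
\begin{equation*}
    F(c,s)=\int_{\R^n}\Phi((x-c)/s)\,d\mu(x)\in\R^{n+1}.
\end{equation*}
Since $\Phi$ is continuous and bounded and $\mu$ is finite, $F$ is continuous on $\R^n\times(0,\infty)$ by dominated convergence. Let $K=\supp\mu\subset B_{R_0}(0)$.

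\emph{Large scale.} As $s\to\infty$, $\Phi((x-c)/s)\to\Phi(0)=(0,\dots,0,1)$ uniformly for $x\in K$ and $c$ in any fixed ball. So the height component $F_{n+1}(c,s)\to\mu(\R^n)>0$ uniformly on such balls.

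\emph{Small scale.} Fix $\varepsilon>0$ and let $s\le\varepsilon$. Points with $|x-c|\ge s$ have $\widehat f(|x-c|/s)=\vartheta_n$, so their height is $\cos\vartheta_n<0$. Points in $B_s(c)$ contribute at most $\mu(B_\varepsilon(c))$ in absolute value. Hence
\begin{equation*}
    F_{n+1}(c,s)\le\cos\vartheta_n\,\mu(\R^n)+2\sup_{c'}\mu(B_\varepsilon(c')).
\end{equation*}
By \cref{lem:nonatomic}, the right-hand side is negative for $\varepsilon$ small, uniformly in $c$.

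\emph{Horizontal centering at fixed $s$.} Let $G_s(c)=(F_1,\dots,F_n)(c,s)$, which is continuous in $c$. For $|c|=L$ with $L$ large relative to $R_0$ and $s$, each $x\in K$ has $(x-c)/s$ far out, where $\Phi$ equals $\sin\vartheta_n\,\frac{x-c}{|x-c|}$ in its first $n$ coordinates. This vector is close to $-c/|c|$, because $L\gg R_0$. Thus $\langle G_s(c),c\rangle<0$ on $|c|=L$, and $\sin\vartheta_n>0$ by \cref{lem:profile}. A standard degree (Brouwer) argument applied to $-G_s$ on $\overline{B_L}$ then gives a zero $c(s)$ with $|c(s)|<L$.

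The main obstacle is that $c(s)$ need not be unique or continuous in $s$, and $L$ may depend on $s$. I would avoid selecting $c(s)$ and instead work on the set $Z=\{(c,s):G_s(c)=0,\ s\in[\varepsilon,S]\}$. The task is to find a connected piece of $Z$ joining $s=\varepsilon$ to $s=S$. I would apply a degree argument to the map
\begin{equation*}
    (c,s)\mapsto(G_s(c),s)
\end{equation*}
on the cylinder $\overline{B_L}\times[\varepsilon,S]$, taking $L$ large enough for all $s$ in this compact range. The boundary condition on $|c|=L$ holds uniformly in $s\in[\varepsilon,S]$, since there $|x-c|/s\ge(L-R_0)/S\ge1$. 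Homotopy invariance then gives $\deg(G_s,B_L,0)\ne0$ for every $s$ in the range. A Leray–Schauder continuum argument yields a connected component of $Z$ meeting both $\{s=\varepsilon\}$ and $\{s=S\}$.

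On this component, $F_{n+1}$ is negative at the $s=\varepsilon$ end and positive at the $s=S$ end. Here $S$ is chosen by the uniform large-scale limit on $\overline{B_L}$, and $L$ is then fixed. Since the component is connected, the intermediate value theorem gives a point $(c,s)$ on it where all $n+1$ components of $F$ vanish, which is \eqref{eq:center}.

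A simpler alternative is a single degree computation for $F$ on the domain $\overline{B_L}\times[\log\varepsilon,\log S]$, viewed as a ball in $\R^{n+1}$. On that boundary, $F$ points "outward" in the sense $\langle F,\nu\rangle\cdot(\text{sign})$ consistently. The homotopy $tF+(1-t)\cdot\mathrm{id}$, after translating to center the box, has no zeros on the boundary, so $\deg\ne0$. I would present this version, checking the sign condition on the three boundary pieces using the estimates above.
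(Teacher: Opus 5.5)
Your final plan, finding a zero of $F$ on a cylinder $\overline{B_L}\times[\varepsilon,S]$ from sign conditions on its three faces, is the paper's strategy. The paper applies Brouwer's theorem to $z\mapsto P_K(z+\mathcal F(z))$ with $\mathcal F=(F_{\rm hor},-F_{n+1})$, which points inward on every face. Your degree computation is equivalent once you flip one block of components: as stated, $F_{\rm hor}$ points inward on the lateral face while $F_{n+1}$ points outward on the top and bottom.

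However, your constants are chosen circularly, and the two requirements are incompatible. You justify the lateral condition $\langle F_{\rm hor}(c,s),c\rangle<0$ only ``far out'', so you need $(L-R_0)/S\ge1$. But you choose $S$ after $L$ so that $F_{n+1}(\cdot,S)>0$ on all of $\overline{B_L}$. If $|c|=L\ge R_0+S$, every $x\in\supp\mu$ has $|x-c|/S\ge1$. Then $\Phi_{n+1}((x-c)/S)=\cos\vartheta_n$, so $F_{n+1}(c,S)=\cos\vartheta_n\,\mu(\R^n)<0$ on the top rim of the cylinder. The continuum version has the same defect. There you only need $F_{n+1}(\cdot,S)>0$ at zeros of $G_S$. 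Far-out reasoning excludes such zeros only for $|c|\ge R_0+S$, and near that sphere $F_{n+1}(\cdot,S)$ is negative.

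The missing observation is that nothing needs to be far out. We have $\Phi_{\rm hor}(y)=\sin\widehat f(|y|)\,y/|y|$, and $\sin\widehat f(r)>0$ for every $r>0$: by \cref{lem:profile}, $\sin f>0$ on $(0,1]$, and $\vartheta_n\in(\pi/2,\pi)$. Hence for every $|c|=L>R_0$ and every $s>0$,
\[
\langle F_{\rm hor}(c,s),c\rangle=\int_{\R^n}\sin\widehat f(|x-c|/s)\,\frac{\langle x-c,c\rangle}{|x-c|}\,d\mu(x)<0,
\]
because $\langle x-c,c\rangle\le|x|\,|c|-|c|^2<0$ on $\supp\mu$. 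This decouples the constants. Fix any $L>R_0$ first, then choose $S$ by the uniform large-scale limit on $\overline{B_L}$, then $\varepsilon$ by \cref{lem:nonatomic}. With this order your single degree argument goes through, and the Leray--Schauder continuum detour is unnecessary.
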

\begin{proof}
    Dividing $\mu$ by its total mass does not change \eqref{eq:center}, so assume $\mu(\R^n)=1$. Choose $R_0>0$ with $\supp\mu\subset B_{R_0}$, fix $C>R_0$, and set
    \begin{equation*}
        F(c,s)=\int_{\R^n}\Phi((x-c)/s)\,d\mu(x),\qquad (c,s)\in\R^n\times(0,\infty).
    \end{equation*}
    The continuity and boundedness of $\Phi$ imply, by dominated convergence, that $F$ is continuous.

    Since $\Phi_{n+1}(0)=1$, choose $\delta>0$ such that $\Phi_{n+1}(y)\ge1/2$ for $|y|\le\delta$. If $S>(R_0+C)/\delta$, then $|(x-c)/S|<\delta$ for $x\in\supp\mu$ and $|c|\le C$. Hence
    \begin{equation*}
        F_{n+1}(c,S)\ge\frac12\qquad(|c|\le C).
    \end{equation*}

    For any $s>0$, we have $\Phi_{n+1}((x-c)/s)=\cos\vartheta_n$ when $x\notin B_s(c)$, while $\Phi_{n+1}((x-c)/s)\le1$ inside $B_s(c)$. Thus
    \begin{align*}
        F_{n+1}(c,s)&=\int_{B_s(c)}\cos\widehat f(|x-c|/s)\,d\mu(x)+\int_{\R^n\setminus B_s(c)}\cos\vartheta_n\,d\mu(x)\\
                    &\le\mu(B_s(c))+\cos\vartheta_n\bigl(1-\mu(B_s(c))\bigr).
    \end{align*}
    Since $\cos\vartheta_n<0$ and the small-ball masses tend to zero uniformly in $c$ by \cref{lem:nonatomic}, we may choose $0<\varepsilon<S$ such that
    \begin{equation*}
        F_{n+1}(c,\varepsilon)<0\qquad(|c|\le C).
    \end{equation*}

    Let $F_{\rm hor}$ denote the first $n$ components of $F$. If $|c|=C$ and $x\in\supp\mu$, then $\langle x-c,c\rangle<0$. Since $\sin\widehat f(r)>0$ for $r>0$,
    \begin{equation*}
        \langle F_{\rm hor}(c,s),c\rangle
        =\int_{\R^n}\frac{\sin\widehat f(|x-c|/s)}{|x-c|}
        \langle x-c,c\rangle\,d\mu(x)<0
        \qquad(\varepsilon\le s\le S).
    \end{equation*}

    Set
    \begin{equation*}
        K=\overline{B_C(0)}\times[\varepsilon,S],\qquad
        \mathcal F(c,s)=\bigl(F_{\rm hor}(c,s),-F_{n+1}(c,s)\bigr).
    \end{equation*}
    The preceding inequalities show that $\mathcal F$ points strictly inward on every boundary face of $K$. Let $P_K$ be the Euclidean nearest-point projection onto $K$. The map
    \begin{equation*}
        T(z)=P_K\bigl(z+\mathcal F(z)\bigr),\qquad z\in K,
    \end{equation*}
    is continuous and maps $K$ into itself. Brouwer's fixed-point theorem therefore gives $z\in K$ with $z=P_K(z+\mathcal F(z))$. For any $w\in K$ and $0<\tau\le1$, convexity gives $z+\tau(w-z)\in K$. Since $z$ is the nearest point in $K$ to $z+\mathcal F(z)$,
    \begin{align*}
        0 \le \bigl|\mathcal F(z)-\tau(w-z)\bigr|^2-|\mathcal F(z)|^2 = -2\tau\langle\mathcal F(z),w-z\rangle+\tau^2|w-z|^2.
    \end{align*}
    Dividing by $2\tau$ and letting $\tau\downarrow0$ yields
    \begin{equation*}
        \langle\mathcal F(z),w-z\rangle\le0 \qquad \text{for every }w\in K.
    \end{equation*}
    If $\mathcal F(z)\ne0$, the inward signs on the boundary faces containing $z$ imply $z+\eta\mathcal F(z)\in K$ for all sufficiently small $\eta>0$; this also holds when $z$ lies in the interior. Taking $w=z+\eta\mathcal F(z)$ gives $\eta|\mathcal F(z)|^2\le0$, a contradiction. Hence $\mathcal F(z)=0$, so $F(z)=0$ and \eqref{eq:center} follows.
\end{proof}

\subsection{Mass transplantation}
For a decreasing radial density, the ball maximizes the integral among sets of the same volume. Strict decrease also characterizes equality.
\begin{lemma}\label{lem:rearrange}
    Let $\Omega\subset\R^n$ be measurable with $|\Omega|=|B_R(c)|$. If $g\ge0$ is decreasing and $\int_{B_R(c)}g(|x-c|)\dd x<\infty$, then
    \begin{equation*}
        \int_\Omega g(|x-c|)\dd x\le\int_{B_R(c)}g(|x-c|)\dd x.
    \end{equation*}
    If $g$ is strictly decreasing, equality holds if and only if $|\Omega\mathbin\triangle B_R(c)|=0$.
\end{lemma}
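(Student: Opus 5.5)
The plan is the standard bathtub comparison: split $\Omega$ and $B_R(c)$ into their common part and the two differences, and compare $g$ on the two differences using the threshold $g(R)$. After translating, assume $c=0$. Put $A=\Omega\setminus B_R$ and $D=B_R\setminus\Omega$. Since $|\Omega|=|B_R|$, subtracting the common part $|\Omega\cap B_R|$ gives $|A|=|D|$; this common part is finite because it is at most $|B_R|$. Then
\begin{equation*}
\int_\Omega g(|x|)\dd x-\int_{B_R}g(|x|)\dd x=\int_A g(|x|)\dd x-\int_D g(|x|)\dd x .
\end{equation*}
Note that $\int_A g$ is finite: on $A$ we have $|x|\ge R$, so $g(|x|)\le g(R)$, and $|A|<\infty$. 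Also $\int_D g\le\int_{B_R}g<\infty$. So the subtraction is legitimate. If $\int_\Omega g$ were infinite, the same bound would contradict this, since $\int_\Omega g\le\int_{B_R}g+g(R)|A|$.

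For the inequality, I use the monotonicity of $g$ on each difference set. On $A$, $g(|x|)\le g(R)$. On $D$, $|x|<R$ except on the null sphere, so $g(|x|)\ge g(R)$. Therefore
\begin{equation*}
\int_A g\le g(R)|A|=g(R)|D|\le\int_D g ,
\end{equation*}
which is the claimed inequality. If $R=0$, everything is trivial: then $|\Omega|=0$ and both sides vanish. A small point to fix is the value of $g$ at the single radius $R$; this is irrelevant because the sphere $\{|x|=R\}$ has measure zero. If "decreasing" is meant only as nonincreasing, I use it as written.

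For the equality case with $g$ strictly decreasing, suppose equality holds. Then both displayed inequalities are equalities, so $\int_A\bigl(g(R)-g(|x|)\bigr)\dd x=0$ with a nonnegative integrand. On $A$ minus the null sphere we have $|x|>R$, so $g(R)-g(|x|)>0$ there, and hence $|A|=0$. Then $|D|=|A|=0$, so $|\Omega\mathbin\triangle B_R|=0$. Conversely, if the symmetric difference is null, the two integrals agree trivially. The only mildly delicate step is the finiteness bookkeeping that justifies the subtraction, and the bound $g\le g(R)$ on $A$ handles it.
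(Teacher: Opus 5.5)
Your proposal is correct and follows essentially the paper's argument: the paper also splits off $E_-=B_R(c)\setminus\Omega$ and $E_+=\Omega\setminus B_R(c)$, uses $|E_-|=|E_+|$, compares $g$ with the threshold $g(R)$ on each set, and concludes in the strict case that both differences are null. Your finiteness bookkeeping, $\int_A g\le g(R)|A|$, is the same point as the paper's remark that the $E_+$ integrand is bounded by $g(R)$.
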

\begin{proof}
    Put $E_-=B_R(c)\setminus\Omega$ and $E_+=\Omega\setminus B_R(c)$. Since $|E_-|=|E_+|$, subtracting the common integral over $\Omega\cap B_R(c)$ gives
    \begin{align*}
        \int_{B_R(c)}&g(|x-c|)\dd x - \int_\Omega g(|x-c|)\dd x \\
                     &= \int_{E_-}\bigl(g(|x-c|)-g(R)\bigr)\dd x + \int_{E_+}\bigl(g(R)-g(|x-c|)\bigr)\dd x \ge 0.
    \end{align*}
    The first term is integrable by assumption, and the second integrand is bounded by $g(R)$. If $g$ is strictly decreasing, both integrands are positive almost everywhere on their respective sets. Thus equality forces $|E_-|=|E_+|=0$, and the converse is immediate.
\end{proof}

\subsection{The first eigenvalue of the model ball}\label{sec:index}
The coordinate equations in \eqref{eq:map-equation} show that $1$ is a $Q$-weighted Neumann eigenvalue. To prove that it is the first positive one, consider the quadratic form
\begin{equation*}
    \qform:H^1(\mathbb{B}^n)\to\R, \qquad \qform[v]=\int_{\mathbb{B}^n}\bigl(|\nabla v|^2-Qv^2\bigr)\dd x.
\end{equation*}
The Morse index of $\qform$ is the maximal dimension of a subspace on which it is negative definite; its full $H^1(\mathbb{B}^n)$ domain gives the natural Neumann boundary condition.
\begin{proposition}\label{prop:index}
    The form $\qform$ has Morse index one, and its associated bilinear form has kernel
    \begin{equation*}
        \Span\{\cos f,\sin f\,\theta_1,\ldots,\sin f\,\theta_n\}.
    \end{equation*}
\end{proposition}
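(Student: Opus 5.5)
Since $Q$ is smooth, positive and radial on $\overline{\mathbb{B}^n}$, the form $\qform$ is diagonalized by separation of variables. I will expand $v\in H^1(\mathbb{B}^n)$ in spherical harmonics, $v=\sum_{k\ge0}\sum_m a_{k,m}(r)Y_{k,m}(\theta)$, where $-\Delta_{\Sph^{n-1}}Y_{k,m}=k(k+n-2)Y_{k,m}$. This gives an orthogonal splitting $\qform[v]=\sum_{k,m}\qform_k[a_{k,m}]$, with
\begin{equation*}
  \qform_k[a]=\int_0^1\Bigl(a'^2+\frac{k(k+n-2)}{r^2}a^2-Qa^2\Bigr)r^{n-1}\dd r .
\end{equation*}
Each $\qform_k$ is a one-dimensional Sturm--Liouville form with Neumann condition at $r=1$ and the regularity condition at $r=0$. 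The Morse index and kernel of $\qform$ are therefore the sums over $(k,m)$ of those of $\qform_k$. By Sturm oscillation, the index of $\qform_k$ equals the number of zeros in $(0,1)$ of the regular solution $a_k$ of $-(r^{n-1}a')'+(k(k+n-2)r^{-2}-Q)r^{n-1}a=0$, with one extra count when $a_k'(1)<0$ relative to the sign of $a_k(1)$. The kernel of $\qform_k$ is nonzero exactly when $a_k'(1)=0$.

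For $k=0$, equation \eqref{eq:map-equation} shows that $a_0=\cos f$ is the regular radial solution. By \cref{lem:profile} it has exactly one simple zero in $(0,1)$, and $a_0'(1)=-\sin f(1)f'(1)=0$. So $\qform_0$ has index one (the negative direction comes from the Neumann ground state below the constant-free level, e.g.\ the constant $1$, since $\qform[1]=-\int Q<0$), and its kernel is spanned by $\cos f$. For $k=1$, the regular solution is $a_1=\sin f$, because $\sin f\,\theta_i$ are coordinates of $U$. It is positive on $(0,1]$ by \cref{lem:profile}, and $a_1'(1)=\cos f(1)f'(1)=0$. So $\qform_1$ has index zero and a one-dimensional kernel, which gives the $n$ kernel functions $\sin f\,\theta_i$. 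Concretely, positivity of $\sin f$ combined with the ground-state (Picone) identity $\qform_1[a]=\int_0^1 (\sin f)^2\bigl((a/\sin f)'\bigr)^2r^{n-1}\dd r$ gives $\qform_1\ge0$, with equality only for multiples of $\sin f$. The boundary term in this identity vanishes because $f'(1)=0$.

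For $k\ge2$, the potential satisfies $k(k+n-2)>n-1$. Applying the same Picone identity with $\sin f$ gives
\begin{equation*}
\qform_k[a]=\qform_1[a]+\bigl(k(k+n-2)-(n-1)\bigr)\int_0^1 a^2r^{n-3}\dd r>0
\end{equation*}
for every $a\ne0$, so these modes contribute neither index nor kernel. Summing over modes yields Morse index exactly one, coming from $k=0$, and the stated $(n+1)$-dimensional kernel.

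The main obstacle is making the $k=0$ count rigorous, namely showing that the radial form has index exactly one and not more. I would use the ground-state transform on each nodal interval of $\cos f$. Let $r_0$ be its unique zero. Picone's identity with $\cos f$ shows that $\qform_0\ge0$ on radial functions vanishing at $r_0$, which form a codimension-one subspace. Hence the index of $\qform_0$ is at most one, and $\qform[1]<0$ shows it is at least one. For the kernel, a radial kernel element $a$ solves the regular ODE, so by uniqueness of regular solutions it is a multiple of $\cos f$. A secondary technical point is justifying the boundary terms at $r=0$ in the Picone identities for $H^1$ functions. I would handle this by density of smooth functions and by the smoothness of $\sin f/r$ from \cref{lem:profile}.
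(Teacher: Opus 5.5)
Your proposal is correct and takes essentially the same route as the paper: a spherical-harmonic splitting, then Picone identities with $\cos f$ in degree zero (on the codimension-one subspace $a(r_0)=0$, combined with $\qform[1]<0$) and with $\sin f$ in degree one, then the comparison $\qform_\ell=\qform_1+(\ell(\ell+n-2)-(n-1))\int_0^1a^2r^{n-3}\dd r$ for $\ell\ge2$. The Sturm-oscillation count is unnecessary once the Picone argument is in place. For the kernel, the paper uses ODE uniqueness at the regular endpoint $r=1$ with $a'(1)=0$, whereas you exclude the singular branch at the origin; both approaches work.
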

\begin{proof}
    Let $v\in H^1(\mathbb{B}^n)$ and write $x=r\theta$, where $0<r<1$ and $\theta\in\Sph^{n-1}$. Let $\{Y_{\ell,k}\}_{\ell\ge0,\,1\le k\le d_\ell}$ be a real orthonormal basis of spherical harmonics, where $d_\ell$ is the dimension of the degree-$\ell$ space. Then
    \begin{equation*}
        -\Delta_{\Sph^{n-1}}Y_{\ell,k}=\ell(\ell+n-2)Y_{\ell,k},\qquad
        \int_{\Sph^{n-1}}Y_{\ell,k}Y_{m,j}\dd\sigma=\delta_{\ell m}\delta_{kj}.
    \end{equation*}
    For almost every $r$, the expansion in $L^2(\Sph^{n-1})$ is
    \begin{equation*}
        v(r\theta)=\sum_{\ell,k}a_{\ell,k}(r)Y_{\ell,k}(\theta),\qquad
        a_{\ell,k}(r)=\int_{\Sph^{n-1}}v(r\eta)Y_{\ell,k}(\eta)\dd\sigma(\eta).
    \end{equation*}
    Since $Q=Q(r)$, orthogonality gives
    \begin{equation}\label{eq:radial-form}
        \begin{aligned}
            \qform[v]=\sum_{\ell,k}\qform_\ell[a_{\ell,k}], \qquad \qform_\ell[a]=\int_0^1\left(a'^2+\frac{\ell(\ell+n-2)}{r^2}a^2-Q(r)a^2\right)r^{n-1}\dd r.
        \end{aligned}
    \end{equation}
    By \eqref{eq:map-equation}, the radial equations in degrees zero and one are
    \begin{align*}
        -(r^{n-1}(\cos f(r))')'&=r^{n-1}Q(r)\cos f(r),\\
        -(r^{n-1}(\sin f(r))')'+(n-1)r^{n-3}\sin f(r)&=r^{n-1}Q(r)\sin f(r).
    \end{align*}
    Testing these equations with $\cos f(r)w(r)^2$ and $\sin f(r)w(r)^2$, respectively, and integrating by parts give, for smooth $w$,
    \begin{equation}\label{eq:radial-ground-state}
        \begin{aligned}
            \qform_0[\cos f(r)w(r)]&=\int_0^1\cos^2 f(r)|w'(r)|^2r^{n-1}\dd r,\\
            \qform_1[\sin f(r)w(r)]&=\int_0^1\sin^2 f(r)|w'(r)|^2r^{n-1}\dd r.
        \end{aligned}
    \end{equation}
    The boundary terms vanish by $f'(1)=0$ and $f(r)=f'(0)r+O(r^3)$ as $r\downarrow0$.

    By \cref{lem:profile}, $\cos f(r)$ has a unique simple zero $r_0\in(0,1)$. If a smooth radial function $a$ satisfies $a(r_0)=0$, then $w(r)=a(r)/\cos f(r)$ extends smoothly across $r_0$. The first identity in \eqref{eq:radial-ground-state} gives
    \begin{equation*}
        \qform_0[a]=\int_0^1\cos^2 f(r)\left|\left(\frac{a(r)}{\cos f(r)}\right)'\right|^2r^{n-1}\dd r\ge0.
    \end{equation*}
    Point evaluation at $r_0>0$ is continuous in the radial $H^1$ norm, so the inequality extends by approximation to the codimension-one subspace $a(r_0)=0$. On the other hand,
    \begin{equation*}
        \qform_0[1]=-\int_0^1Q(r)r^{n-1}\dd r<0.
    \end{equation*}
    Constants give a negative direction, while every two-dimensional subspace contains a nonzero function with $a(r_0)=0$. Thus $\qform_0$ has Morse index one.

    In degree one, a smooth radial coefficient satisfies $a(r)=a'(0)r+O(r^3)$, while $\sin f(r)=f'(0)r+O(r^3)>0$ for $0<r\le1$. Thus $w(r)=a(r)/\sin f(r)$ extends smoothly across the origin. The second identity in \eqref{eq:radial-ground-state} gives
    \begin{equation*}
        \qform_1[a]=\int_0^1\sin^2 f(r)\left|\left(\frac{a(r)}{\sin f(r)}\right)'\right|^2r^{n-1}\dd r\ge0.
    \end{equation*}
    Approximation gives $\qform_1\ge0$ on its full form domain. In either degree, a radial kernel function solves the corresponding second-order equation with $a'(1)=0$. The equation is regular at $r=1$, so the solution is determined by $a(1)$. As $\cos f(1)\sin f(1)\ne0$, the radial kernels are $\Span\{\cos f\}$ and $\Span\{\sin f\}$, respectively.

    All positive degrees have the same radial form domain, since the angular term in \eqref{eq:radial-form} requires $\int_0^1a(r)^2r^{n-3}\dd r<\infty$. For $\ell\ge2$, the nonnegativity of $\qform_1$ yields
    \begin{equation*}
        \qform_\ell[a] = \qform_1[a]+\bigl(\ell(\ell+n-2)-(n-1)\bigr)\int_0^1a(r)^2r^{n-3}\dd r>0 \qquad (a\ne0).
    \end{equation*}
    By completeness of the spherical-harmonic expansion, degree zero contributes the only negative direction and the zero mode $\cos f$, while degree one contributes the $n$ zero modes $\sin f(r)\theta_1,\ldots,\sin f(r)\theta_n$.
\end{proof}
We now use the index count to locate the coordinate eigenvalue in the weighted Neumann spectrum.
\begin{corollary}\label{cor:ball-spectrum}
    The density $Q$ is smooth and strictly positive on $\overline{\mathbb{B}^n}$, and
    \begin{equation}\label{eq:ball-value}
        \lambda_1^N(\mathbb{B}^n,Q\dd x)=1,\qquad \int_{\mathbb{B}^n}Q\dd x=\frac{n(n-1)}{n-2}\omega_n\sin^2\vartheta_n.
    \end{equation}
    Its first positive eigenspace is the $(n+1)$-dimensional kernel in \cref{prop:index}.
\end{corollary}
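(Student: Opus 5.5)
The smoothness and strict positivity of $Q$ on $\overline{\mathbb{B}^n}$ have already been recorded after \cref{lem:profile}. There $\sin f(r)/r$ and $\cos f$ extend smoothly, $Q(0)=nc^2>0$, and $\sin f>0$ on $(0,1]$. The mass identity in \eqref{eq:ball-value} is exactly the Pohozaev computation in \cref{lem:energy}. What remains is to show $\lambda_1^N(\mathbb{B}^n,Q\dd x)=1$ and to identify the eigenspace. Since $Q$ is smooth and positive, the weighted problem $-\Delta v=\lambda Qv$ with Neumann condition has discrete spectrum $0=\lambda_0<\lambda_1\le\cdots$. The corresponding eigenfunctions are $L^2(Q\dd x)$-orthogonal and complete in $H^1(\mathbb{B}^n)$, with $\lambda_0$ simple and carried by the constants. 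The infimum in \eqref{eq:rayleigh} over $\mathscr D(\mathbb{B}^n)$ agrees with the one over $H^1$, because $C^\infty(\overline{\mathbb{B}^n})$ is dense and both constraints are continuous in $H^1$.

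For the upper bound, \eqref{eq:map-equation} gives $\int QU_j\dd x=0$ and Rayleigh quotient $1$ for each coordinate $U_j$. Hence $\lambda_1\le1$.

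For the lower bound, I argue by Morse index. Suppose $\lambda_1<1$, and let $\varphi_1$ be a corresponding eigenfunction. On $\Span\{1,\varphi_1\}$, the $L^2(Q)$-orthogonality of the two functions gives
\[
\qform[\alpha+\beta\varphi_1]=-\alpha^2\int Q\dd x+\beta^2(\lambda_1-1)\int Q\varphi_1^2\dd x .
\]
This is negative definite, so the Morse index of $\qform$ is at least two, contradicting \cref{prop:index}. Therefore $\lambda_1=1$.

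For the eigenspace, let $\varphi$ be any eigenfunction with eigenvalue $1$. Then $\qform$'s bilinear form satisfies $\int(\nabla\varphi\cdot\nabla w-Q\varphi w)\dd x=0$ for all $w\in H^1$, which places $\varphi$ in the kernel. Conversely, a kernel element $\psi$ is a weak solution of $-\Delta\psi=Q\psi$ with Neumann condition. Testing with $w=1$ gives $\int Q\psi\dd x=0$, so $\psi$ is an eigenfunction with eigenvalue $1$. The eigenspace therefore equals the kernel in \cref{prop:index}. That kernel is spanned by $\cos f=U_{n+1}$ and $\sin f\,\theta_j=U_j$, and these $n+1$ functions are linearly independent because they lie in distinct spherical-harmonic degrees and sectors. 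So the eigenspace has dimension $n+1$.

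The step needing the most care is the identification of the variational level with the spectral $\lambda_1$. This requires discreteness of the weighted spectrum and completeness of the eigenfunctions, which follow from compactness of $H^1\hookrightarrow L^2(Q\dd x)$ on the ball together with $Q$ bounded above and below. With that in place, the index argument closes.
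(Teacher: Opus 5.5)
Your proposal is correct and follows the paper's proof essentially step for step: the coordinates $U_j$ give the eigenvalue $1$; an eigenfunction for an eigenvalue below $1$, together with the constants, would span a two-dimensional negative definite subspace for $\qform$, contradicting \cref{prop:index}; and eigenfunctions at eigenvalue $1$ are exactly the kernel of the bilinear form. Your extra remarks only make explicit what the paper leaves implicit: both inclusions for the eigenspace, the linear independence of the $n+1$ kernel functions, and the agreement of the infimum over $\mathscr D(\mathbb{B}^n)$ with the one over $H^1(\mathbb{B}^n)$.
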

\begin{proof}
    The regular profile makes $Q$ smooth and positive on $\overline{\mathbb{B}^n}$, so the weighted Neumann spectrum is discrete; \cref{lem:energy} gives $\int_{\mathbb{B}^n}Q\dd x=\frac{n(n-1)}{n-2}\omega_n\sin^2\vartheta_n$. By \eqref{eq:map-equation}, every coordinate $U_j$ solves the Neumann problem with eigenvalue $1$. Integrating its equation gives $\int_{\mathbb{B}^n}QU_j\dd x=0$. Since $U_j$ is nonconstant, $1$ is a positive eigenvalue.

    Suppose that there is an eigenvalue $0<\lambda<1$ with eigenfunction $\varphi$. Testing its equation with $1$ and $\varphi$ gives
    \begin{equation*}
        \int_{\mathbb{B}^n}Q\varphi\dd x=0,\qquad
        \int_{\mathbb{B}^n}|\nabla\varphi|^2\dd x
        =\lambda\int_{\mathbb{B}^n}Q\varphi^2\dd x.
    \end{equation*}
    The first equality eliminates the mixed term in $\qform[a+b\varphi]$. Thus, for $(a,b)\ne(0,0)$,
    \begin{equation*}
        \qform[a+b\varphi]
        =-a^2\int_{\mathbb{B}^n}Q\dd x
        +b^2(\lambda-1)\int_{\mathbb{B}^n}Q\varphi^2\dd x<0.
    \end{equation*}
    The functions $1$ and $\varphi$ are linearly independent, so their span would be a two-dimensional negative definite subspace, contradicting \cref{prop:index}. Hence no eigenvalue lies in $(0,1)$. Since $\mathbb{B}^n$ is connected, the eigenvalue $0$ consists only of constants; the coordinate eigenfunctions therefore show that $\lambda_1^N(\mathbb{B}^n,Q\dd x)=1$.

    Finally, the weak equation for an eigenfunction at eigenvalue $1$ is
    \begin{equation*}
        \int_{\mathbb{B}^n}\langle\nabla\varphi,\nabla\psi\rangle\dd x
        =\int_{\mathbb{B}^n}Q\varphi\psi\dd x
        \qquad(\psi\in H^1(\mathbb{B}^n)).
    \end{equation*}
    Thus the bilinear form associated with $\qform$ vanishes on $(\varphi,\psi)$ for every $\psi\in H^1(\mathbb{B}^n)$. By \cref{prop:index}, its kernel is the stated $(n+1)$-dimensional eigenspace.
\end{proof}

\section{Proof of main results}\label{sec:main-proof}
We now combine the preceding results to prove the sharp weighted Neumann inequality. We then characterize equality and establish the Steklov corollary.

\subsection{The sharp weighted Neumann bound}\label{subsec:main-inequality}
By spatial dilation, assume $|\Om|=\omega_n$. The case $\mu=0$ is immediate. Otherwise, extend $\mu$ by zero to $\R^n$, choose $(c,s)$ by \cref{lem:centering}, and set $W(x)=\Phi((x-c)/s)$. Every coordinate of $W$ belongs to $\mathscr D(\Om)$ and has zero $\mu$-mean. Since $|W|=1$ and $\mu(\overline\Om)>0$, at least one coordinate has positive weighted norm and a finite Rayleigh quotient. Thus $\lambda_1^N(\Om,\mu)<\infty$, and \eqref{eq:rayleigh} gives
\begin{equation*}
    \lambda_1^N(\Om,\mu)\int_{\overline\Om}W_j^2\,d\mu \le\int_\Om|\nabla W_j|^2\dd x.
\end{equation*}
For a coordinate with zero weighted norm, this inequality follows from the nonnegativity of its energy. Summing over the coordinates and using $\sum_{j=1}^{n+1}W_j^2=1$, we obtain the first inequality in
\begin{equation}\label{eq:comparison-chain}
    \overline\lambda_1^N(\Om,\mu) \le\int_\Om|\nabla W|^2\dd x \le\int_{B_1(c)}|\nabla W|^2\dd x \le\frac{n(n-1)}{n-2}\omega_n\sin^2\vartheta_n.
\end{equation}
The density $|\nabla W|^2=s^{-2}\widehat Q(|x-c|/s)$ is strictly decreasing in $|x-c|$, so \cref{lem:rearrange} gives the second inequality. The last follows from \eqref{eq:dilation} after translation. Restoring the volume proves \eqref{eq:main-bound}.

By \cref{cor:ball-spectrum}, the ball with density $Q$ attains the bound in \eqref{eq:comparison-chain}. Taking the supremum over measures and then over admissible domains proves the optimization identity in \cref{thm:main}.

\subsection{Rigidity in \cref{thm:main}}\label{subsec:main-equality}
Suppose equality holds in \eqref{eq:main-bound}. By spatial dilation, assume $|\Om|=\omega_n$. Then $\mu\ne0$ and $\lambda_1^N(\Om,\mu)>0$. Rescale $\mu$ so that $\lambda_1^N(\Om,\mu)=1$; consequently, $\mu(\overline\Om)=\frac{n(n-1)}{n-2}\omega_n\sin^2\vartheta_n$. Choose $(c,s)$ from \cref{lem:centering} and set $W(x)=\Phi((x-c)/s)$. Equality in \eqref{eq:main-bound} forces equality throughout \eqref{eq:comparison-chain}. By \cref{lem:energy} and \cref{lem:rearrange}, this gives
\begin{equation*}
    |\Om\mathbin\triangle B_1(c)|=0,\qquad 0<s\le1.
\end{equation*}
Each coordinate satisfies the Rayleigh inequality used above. Since their nonnegative differences sum to zero, equality holds for every coordinate:
\begin{equation*}
    \int_\Om|\nabla W_j|^2\dd x=\int_{\overline\Om} W_j^2\,d\mu,\qquad j=1,\ldots,n+1.
\end{equation*}
For any $\eta\in\mathscr D(\Om)$ with zero $\mu$-mean, the Rayleigh inequality applied to $W_j+t\eta$ is an equality at $t=0$. Its first variation therefore vanishes. Subtracting the $\mu$-mean of a general $\eta$ does not change the resulting identity, because $\int_{\overline{\Omega}} W_j\,d\mu=0$. Hence
\begin{equation}\label{eq:weak-equality}
    \int_\Om\nabla W_j\cdot\nabla\eta\dd x =\int_{\overline\Om} W_j\eta\,d\mu \qquad(\eta\in\mathscr D(\Om)).
\end{equation}

Let $\zeta$ be smooth on a neighborhood of $\overline\Om$. Taking $\eta=\zeta W_j$ in \eqref{eq:weak-equality} and summing over $j$, we use $\sum_jW_j^2=1$ and $\sum_jW_j\nabla W_j=0$ to obtain
\begin{equation*}
    \int_{\overline\Om}\zeta\,d\mu=\int_\Om\zeta|\nabla W|^2\dd x.
\end{equation*}
Such smooth functions are uniformly dense in $C^0(\overline\Om)$, so the identity determines both finite Radon measures on $\overline\Om$. Substituting the resulting measure into \eqref{eq:weak-equality} gives
\begin{equation}\label{eq:measure-map}
    \mu=\mathbf1_\Om|\nabla W|^2\dd x,\qquad -\Delta W=|\nabla W|^2W\quad\text{weakly in }\Om.
\end{equation}
In particular, $\mu$ has no boundary-supported part.

We claim that $s=1$. Otherwise, $\Om\cap\{s<|x-c|<1\}$ is a nonempty open set because $\Om$ agrees almost everywhere with $B_1(c)$. Choose a nonzero nonnegative $\eta\in C_c^\infty(\Om\cap\{s<|x-c|<1\})$. On its support, $W_{n+1}=\cos\vartheta_n<0$, whereas $|\nabla W|^2=(n-1)\sin^2\vartheta_n/|x-c|^2>0$. Taking $j=n+1$ in \eqref{eq:weak-equality} and using \eqref{eq:measure-map}, we obtain
\begin{equation*}
    0=\int_{\overline\Om} W_{n+1}\eta\,d\mu=\cos\vartheta_n\int_\Om\eta|\nabla W|^2\dd x<0,
\end{equation*}
a contradiction. Thus $s=1$.

Since $\Om$ is open, the almost-everywhere identity implies $\Om\subset B_1(c)$: otherwise, a neighborhood of a point in $\Om\setminus B_1(c)$ would contain a positive-measure subset of $\Om\setminus B_1(c)$. The same identity makes $\Om$ dense in $B_1(c)$, so $\overline\Om=\overline{B_1(c)}$. Since $s=1$, we have $W=U(x-c)$ on this closure. By \eqref{eq:measure-map}, $\mu=Q(|x-c|)\dd x$ under the chosen normalization; restoring the measure scale gives an arbitrary positive multiple.

If $\Om$ has Lipschitz boundary and $x\in B_1(c)\setminus\Om$, then $x\in\partial\Om$. The exterior density property of $\Om$ gives, for all sufficiently small $\varepsilon>0$ with $B_\varepsilon(x)\subset B_1(c)$,
\begin{equation*}
    0<|B_\varepsilon(x)\setminus\Om|\le|B_1(c)\setminus\Om|=0,
\end{equation*}
a contradiction. Therefore $\Om=B_1(c)$.

For general volume $|\Om|=\omega_nR^n$, spatial dilation gives, for some $c\in\R^n$ and $a>0$,
\begin{equation*}
    |\Om\mathbin\triangle B_R(c)|=0,\qquad \overline\Om=\overline{B_R(c)},\qquad \mu=aQ(|x-c|/R)\dd x.
\end{equation*}
If $\Om$ is Lipschitz, then $\Om=B_R(c)$. Conversely, for $\Om=B_R(c)$ and $\mu=aQ(|x-c|/R)\dd x$, changing variables $x=c+Ry$ and using \cref{cor:ball-spectrum} give
\begin{equation*}
    \lambda_1^N(\Om,\mu)=\frac{1}{aR^2},\qquad
    \mu(\overline\Om)=aR^n\frac{n(n-1)}{n-2}\omega_n\sin^2\vartheta_n.
\end{equation*}
Thus $\overline\lambda_1^N(\Om,\mu)=\frac{n(n-1)}{n-2}\omega_n\sin^2\vartheta_n R^{n-2}$, as required. Without the Lipschitz condition, the necessity statement determines $\Om$ almost everywhere and determines its closure; null modifications of an open set need not preserve its Sobolev form domain.

\subsection{Proof of \cref{cor:steklov}}\label{sec:steklov}
Let $\Om$ be admissible with $|\partial\Om|<\infty$ and take $\mu=\mathcal H^{n-1}\!\restriction_{\partial\Om}$. This measure is finite and nonatomic. Applying \eqref{eq:main-bound} and \eqref{eq:steklov-measure} gives the non-strict form of \eqref{eq:steklov-bound}. Equality would make $\mu$ a positive interior density by the preceding equality analysis, although it is concentrated on $\partial\Om$. Therefore \eqref{eq:steklov-bound} is strict. This also covers disconnected domains, whose first variational level after the constants may be zero.

For sharpness, \eqref{eq:ball-value} and the upper bound show that the supremum over nonnegative $L^1$ densities on $\mathbb{B}^n$ is $\frac{n(n-1)}{n-2}\omega_n\sin^2\vartheta_n$, attained by $Q$. The Steklov approximation theorem of Girouard, Karpukhin, and Lagac\'e \cite{GirouardKarpukhinLagace2021}, in the form of \cite[Proposition~1.4]{Vinokurov2026}, supplies bounded $C^1$ domains $\Om_j\subset \mathbb{B}^n$ with
\begin{equation*}
    |\Om_j|\to\omega_n,\qquad \sigma_1(\Om_j)|\partial\Om_j|\to\frac{n(n-1)}{n-2}\omega_n\sin^2\vartheta_n.
\end{equation*}
Multiplying by $|\Om_j|^{(2-n)/n}$ gives the sharp limit in \cref{cor:steklov}. The constant is approached by $C^1$ perforated domains but is not attained by an admissible domain with finite boundary measure.

\section*{Acknowledgments}
The authors acknowledge the use of AI to assist with English-language editing, manuscript polishing, and preliminary exploration of mathematical proofs. The authors take full responsibility for all mathematical content, arguments, and the final version of the manuscript.

\bibliographystyle{amsalpha-author-sorted}
\bibliography{CLY-Q1.12-260926}

\vspace{1cm}

\end{document}